\newtheorem{theorem}{Theorem}[section]
\newtheorem{lemma}[theorem]{Lemma}
\newtheorem{fact}[theorem]{Fact}
\theoremstyle{definition}\newtheorem{definition}[theorem]{Definition}
\theoremstyle{definition}
\theoremstyle{definition}
\numberwithin{equation}{section}
\newcommand{\bd}{\begin{definition}}
	\newcommand{\ed}{\end{definition}}
\DeclareMathOperator{\graph}{graph}
\DeclareMathOperator{\dom}{dom}
\DeclareMathOperator{\proj}{proj}
\DeclareMathOperator{\bij}{bij}
\newcommand{\concatt}{%
	\mathbin{\raisebox{1ex}{\scalebox{.7}{$\frown$}}}%
}
\newcommand{\bs}{\mathbf{\Sigma}^1_1}
\newcommand{\bdelta}[2]{\mathbf{\Delta}^{#1}_{#2}}
\newcommand{\bSigma}[2]{\mathbf{\Sigma}^{#1}_{#2}}
\newcommand{\bPi}[2]{\mathbf{\Pi}^{#1}_{#2}}
\newcommand{\Bairespace}{\omega^\omega}
\newcommand{\nDom}{\mathcal{ND}}
\newcommand{\HyperCompact}[1]{\mathcal{K}(#1)}
\newcommand{\HyperClosed}[1]{\mathcal{F}(#1)}
\newcommand{\Ztoinf}{\mathbb{Z}^\omega}
\newcommand{\Project}[1]{\proj_{#1}}
\newcommand{\PlusProd}{+^{p}}
\newcommand{\Bijection}{\bij}
\newcommand{\N}{\omega}
\newcommand{\lh}{lh}
\pgfplotsset{soldot/.style={color=blue,only marks,mark=*}}
\title{Haar-positive closed subsets of Haar-positive analytic sets}
\author{M\'arton Elekes}
\address{Alfr\'ed R\'enyi Institute of Mathematics, Hungarian Academy of Sciences,
	PO Box 127, 1364 Budapest, Hungary and E\"otv\"os Lor\'and
	University, Institute of Mathematics, P\'azm\'any P\'eter s. 1/c,
	1117 Budapest, Hungary}
\email{elekes.marton@renyi.mta.hu}
\urladdr{http://www.renyi.hu/$\sim$emarci}
\author{M\'ark Po\'or}
\address{E\"otv\"os Lor\'and University, Institute of Mathematics, P\'azm\'any P\'eter s. 1/c, 1117 Budapest, Hungary}
\email{sokmark@gmail.com}
\author{Zolt\'an Vidny\'anszky}
\address{Kurt G\"odel Research Center for Mathematical Logic, Universit\"at Wien, W\"ah\-rin\-ger Strasse 25, 1090 Wien, Austria}
\email{zoltan.vidnyanszky@univie.ac.at}
\thanks{All three authors were supported by the National Research, Development and Innovation Office -- NKFIH, grants no.~124749, 129211. The first and third authors were also supported by the National Research, Development and Innovation Office -- NKFIH, grant no.~113047. The third author was also supported by FWF Grants P29999, P28153, and M2779.}
\begin{document}
	\maketitle
	\begin{abstract} We show that every non-Haar-null analytic subset of $\mathbb{Z}^\omega$ contains a non-Haar-null closed subset. Moreover, we also prove that the codes of Haar-null analytic subsets, and, consequently, closed Haar-null  sets in the Effros Borel space of $\mathbb{Z}^\omega$ form a $\mathbf{\Delta}^1_2$ set.
	\end{abstract}	
	
	It is not hard to see that non-locally-compact Polish groups do not admit a Haar measure (that is, an invariant $\sigma$-finite Borel measure). However, Christensen \cite{chr} (and later, independently, Hunt-Sauer-Yorke \cite{hsy}) generalized the ideal of Haar measure zero sets to every Polish group as follows:

	\begin{definition} Let $(G,\cdot)$ be a Polish group and $S \subset G$. 
		We say that $S$ is \emph{Haar-null}, (in symbols, $S \in \mathcal{HN}$) if there exists a universally measurable set $U \supset S$  (that is, a set measurable with respect to every Borel probability measure)
		and a Borel probability measure $\mu$ on $G$ such that for every $g,h  \in G$
		we have $\mu(gUh)=0$. Such a measure $\mu$ is called a \emph{witness measure} for $S$.
	\end{definition}
	
	This notion has found wide application in diverse areas such as functional analysis, dynamical systems, group theory, 
	geometric measure theory, and analysis (see, e.g., \cite{rosendal,banakhhaar,soleckihaar,soleckihaar2,cohen,aubry}). 
	It provides a well-behaved notion of ``almost every'' (or ``prevalent'') element of a Polish group. It is natural to investigate the regularity properties of Haar-null sets. In particular, one might wonder whether ``small sets are contained in nice small sets'' and whether ``large sets contain nice large sets''. Concerning the first question, Solecki \cite{soleckihaar2} has shown a positive statement, namely, that every analytic Haar-null set is contained in a Borel Haar-null set. On the negative side, the first and the third author \cite{haarnull} proved that, unlike the situation in locally-compact groups, in non-locally compact abelian Polish groups there are Borel Haar-null sets that have no $G_\delta$ Haar-null supersets. 
	
	In this paper we address the second question, and answer it positively in the case of a concrete non-locally compact Polish group, $\mathbb{Z}^\omega$, that is, the $\omega$'th power of the additive group of the integers:
	
	\begin{theorem} 
		\label{t:mainintro}
		Every analytic non-Haar-null subset of $\mathbb{Z}^\omega$ contains a closed non-Haar-null subset. 
	\end{theorem}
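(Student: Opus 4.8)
Two reductions underlie the argument. First, Haar-null sets in any Polish group admit \emph{compactly supported} witness measures: if $\mu$ witnesses that $S$ is Haar-null and $\mu(K)>\tfrac12$ for some compact $K$, then $\mu|_K/\mu(K)$ does too. Hence $S\subseteq\mathbb Z^\omega$ is non-Haar-null exactly when for every compactly supported Borel probability $\mu$ there is $g$ with $\mu(S+g)>0$. Second, the Haar-null ideal has a Fubini property along $\mathbb Z^\omega=\mathbb Z\times\mathbb Z^{\{1,2,\dots\}}$; since the only Haar-null subset of the locally compact group $\mathbb Z$ is $\emptyset$, this yields the clean dichotomy
\[
S\ \text{is non-Haar-null}\ \Longleftrightarrow\ \exists a\in\mathbb Z:\ S_a:=\{x:(a)^\frown x\in S\}\ \text{is non-Haar-null in}\ \mathbb Z^{\{1,2,\dots\}},
\]
whose nontrivial direction is easy here: if every section $S_a$ is Haar-null then $B:=\bigcup_{a}S_a$ is a countable union of Haar-null sets, hence Haar-null, and $S\subseteq\mathbb Z\times B$ is Haar-null with witness $\delta_0\otimes\rho$ for any $\rho$ witnessing $B$. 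Iterating coordinatewise, for non-Haar-null $S$ the \emph{derived tree} $D(S):=\{s\in\mathbb Z^{<\omega}:S_s\ \text{is non-Haar-null}\}$ is a pruned tree on $\mathbb Z$ with $[D(S)]\subseteq\overline S$; moreover $S\setminus[D(S)]$ is a countable union of Haar-null sets, so $S\cap[D(S)]$ is again non-Haar-null, and one checks $D(S\cap[D(S)])=D(S)$, i.e.\ the derivative stabilizes in one step.

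If $A$ were closed this would already finish the proof, with $[D(A)]$ the required subset; the entire difficulty is to descend from $\overline A$ to $A$. Fix a pruned tree $T$ on $\mathbb Z\times\omega$ with $A=p[T]$. The plan is a fusion producing an increasing sequence of finite subtrees $S_0\subseteq S_1\subseteq\cdots$ of $T$ and levels $0=\ell_0<\ell_1<\cdots$ so that: (i) each $S_k$ is finitely branching in the $\omega$-coordinate, which (by K\"onig's lemma applied to the fibres) makes $F:=p[\,\bigcup_k S_k\,]$ closed, and hence a closed subset of $p[T]=A$; (ii) every $\mathbb Z$-part occurring in $\bigcup_kS_k$ lies in $D(A)$, i.e.\ its corresponding section of $A$ is non-Haar-null; and (iii) a \emph{wideness} condition is met cofinally along every branch — at infinitely many levels the $\mathbb Z$-branching is infinite, in fact ``unbounded relative to the preceding coordinates'' in the sense furnished by the dichotomy above. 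In the recursion step one extends each maximal node $(s,t)$ of $S_k$: using the dichotomy — together with the observation that if all sufficiently long nodes of $D(A_s)$ were finitely branching then $[D(A_s)]$ would be $\sigma$-compact, hence Haar-null, contradicting (ii) — one prolongs $s$ up to a new level $\ell_{k+1}$ along which branching over an arbitrarily prescribed box becomes available, choosing compatible witnesses $t'\supseteq t$ in $T$ and keeping the $\omega$-branching finite.

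The final and decisive point is that (ii)--(iii) force $F$ to be non-Haar-null. This is the heart of the matter and the place where the specific geometry of $\mathbb Z^\omega$ is used: given any compactly supported $\mu$, one must produce a translate $F+g$ of positive $\mu$-measure. The key input on the ``small'' side is that every compact — indeed every $\sigma$-compact — subset of $\mathbb Z^\omega$ is Haar-null because it has vanishing uniform density inside sufficiently large boxes $\prod_n[-N_n,N_n]$; playing this against a wide level of the construction, where a suitable translate of $F$ still contains a full coset direction inside such a box, prevents $\mu$ from annihilating all translates of $F$. I expect the genuine obstacle to be exactly the calibration of condition (iii): it must be strong enough to yield non-Haar-nullness of $F$ by the above mechanism, yet weak enough to be propagated through the fusion while the construction is simultaneously being closed up inside $T$ so as to keep $F$ closed and inside $A$. (A more descriptive-set-theoretic alternative, in the spirit of the paper's complexity computation, would be to recast ``$A$ has a closed non-Haar-null subset'' over the Effros Borel space and exploit absoluteness to pass to a model in which the derived-tree analysis closes the gap; the fusion above seems the more self-contained route.)
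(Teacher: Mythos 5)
Your preliminary reductions (compactly supported witnesses, the Fubini-type dichotomy along $\mathbb{Z}\times\mathbb{Z}^{\{1,2,\dots\}}$, the derived tree) are correct, but the proof has a genuine gap exactly where you flag it: the claim that conditions (ii)--(iii) force $F$ to be non-Haar-null is not only unproved --- condition (iii) as stated is demonstrably insufficient. ``Infinite $\mathbb{Z}$-branching at cofinally many levels'' does not prevent Haar-nullness: the set $\prod_n X_n$ with $X_n=\mathbb{Z}$ for even $n$ and $X_n=\{0\}$ for odd $n$ is Haar-null (witnessed by a product of uniform measures on $[0,n]$ in the odd coordinates and $\delta_0$ elsewhere), and even \emph{everywhere} infinitely branching trees can have Haar-null bodies, e.g.\ $\prod_n\{k!:k\in\omega\}$, because a product witness measure only sees the \emph{density} of each coordinate section inside a large box, not its cardinality. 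So no purely tree-combinatorial wideness condition will do; you need a quantitative calibration against a cofinal family of candidate witness measures, realized inside $T$ while the $\omega$-part is kept finitely branching. That calibration is the entire content of the theorem, and your plan does not supply it.

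The paper resolves precisely this point with two external inputs absent from your plan. First, Solecki's lemma (Lemma \ref{l:coneof}): for every Haar-null $S$ there is $b$ such that \emph{every} product measure $\mu_{b'}$ with $b'\geq^* b$ witnesses $S\in\mathcal{HN}$. This converts ``$A$ is non-Haar-null'' into a domination statement: the set of codes $(b',t,c)$ for configurations ``compact $K\subset\prod_n[0,b'(n)]$ with $\mu_{b'}(K)>0$ and $K+t\subset A$'' (lifted into a closed $F$ projecting to $A$) is \emph{dominating} in $\omega^\omega$ (Lemma \ref{l:dom}). Second, the Brendle--Hjorth--Spinas theorem (Fact \ref{f:contrest}): a dominating analytic set contains a closed dominating subset on which a prescribed Borel map is continuous. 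Projecting the union of the coded compact pieces over such a closed dominating set of codes yields the desired closed non-Haar-null subset; closedness comes from a compactness argument requiring $2b\leq^*|t|$, which is arranged via Lemma \ref{l:nondom} (the set of $g$ with $|g(n)|\leq f(n)$ for infinitely many $n$ is Haar-null --- a much larger small set than the $\sigma$-compact ones your sketch invokes). If you wish to keep the fusion framework, condition (iii) would have to be strengthened to something equivalent to this domination statement, at which point you would in effect be reproving both external results.
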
	
	
	Our proof is based on the results of Solecki \cite{soleckihaar2} and Brendle-Hjorth-Spinas \cite{brendle}. Roughly speaking, a theorem from the former paper allows us to use witness measures of a very special form, and thus to reduce the understanding of the Haar-null ideal to the understanding of the non-dominating ideal (see Section \ref{s:prel} for the definitions), while the latter contains the regularity properties of the latter ideal. The reduction is based on a coding map and utilizes a compactness argument.
	
	We also calculate the exact projective class of the codes of the analytic Haar-null subsets of $\Ztoinf$, as well as the set $\{C \in \HyperClosed{\Ztoinf}:C \in \mathcal{HN}\}$, which turn out to be $\bdelta{1}{2}$.
	
	\section{Preliminaries and basic facts}
	\label{s:prel}
	
	We start with the most important definitions and theorems that will be used in the proof. We will adapt the notation from \cite{kechrisbook} for descriptive set theoretic concepts.

	The following fact is just a trivial consequence of standard results. 
	
	\begin{fact}
		\label{f:unif} Assume that $X,Y$ are Polish spaces, $F \subset X \times Y$ is Borel, $\mu$ is a Borel probability measure on $X$, and $K_0 \subset \Project{X}(F)$ is a compact set with $\mu(K_0)>0$. Then there exists a compact set $K \subset F$ such that $\Project{X}(K) \subset K_0$ and $\mu(\Project{X}(K))>0$. 
	\end{fact}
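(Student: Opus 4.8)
The plan is to choose a measurable selector from the fibers of $F$ and then shrink its domain to a compact set on which it is continuous. First, since $F$ is Borel it is in particular analytic, so the Jankov--von Neumann uniformization theorem provides a $\sigma(\bs)$-measurable map $f \colon \Project{X}(F) \to Y$ with $(x,f(x)) \in F$ for every $x \in \Project{X}(F)$. Every $\sigma(\bs)$-measurable set is universally measurable, hence $f \restriction K_0$ is measurable with respect to the completion of the finite Borel measure $\mu \restriction K_0$ on the compact metrizable space $K_0$.

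Next I would invoke Lusin's theorem on $K_0$: taking $\eps = \mu(K_0)/2 > 0$, there is a compact set $K_1 \subseteq K_0$ with $\mu(K_0 \setminus K_1) < \eps$, hence $\mu(K_1) > 0$, such that $f \restriction K_1$ is continuous. I would then set $K := \{(x,f(x)) : x \in K_1\}$, that is, the graph of $f \restriction K_1$. Being the image of the compact set $K_1$ under the continuous map $x \mapsto (x,f(x))$, the set $K$ is compact; by the defining property of $f$ we have $K \subseteq F$; and $\Project{X}(K) = K_1 \subseteq K_0$ with $\mu(\Project{X}(K)) = \mu(K_1) > 0$, as required.

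The argument is essentially routine, and the only step that needs a moment's thought is the measurability of the selector against $\mu$: this is exactly why one uses the $\sigma(\bs)$-measurable Jankov--von Neumann selector rather than hoping for a Borel uniformization (which need not exist for a general Borel $F$), together with the inclusion $\sigma(\bs) \subseteq$ universally measurable. The compactness of the graph of a continuous function on a compact set is the other small point, but it is immediate, so no genuine obstacle arises.
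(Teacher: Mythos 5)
Your proof is correct and follows essentially the same route as the paper: a Jankov--von Neumann selector, Lusin's theorem on the compact positive-measure set $K_0$, and taking the graph of the continuous restriction as $K$. The only (immaterial) difference is that you select on all of $\Project{X}(F)$ and then restrict to $K_0$, whereas the paper applies the uniformization theorem to $K_0$ directly.
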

	
	\begin{proof}
		Using the Jankov, von-Neumann Uniformization theorem (see, \cite[Theorem 18.1]{kechrisbook}) there exists a measurable function $h:K_0 \to Y$ with $\graph(h) \subset F$. Consequently, by Lusin's theorem, there exists a compact set $K_1 \subset K_0$ with $\mu(K_1)>0$ and such that $h \restriction K_1$ is continuous. But then $K=\graph(h \restriction K_1)$ satisfies the required properties. 
	\end{proof}
	
	If $b \in \omega^{\omega}$ let us denote by $\mu_b$ the natural product probability measure on $\prod_{n \in \omega} [0,b(n)] \subset \Ztoinf$. For a Polish space $X$ we will denote by $\HyperCompact{X}$ and $\HyperClosed{X}$ the space of compact subsets of $X$ with the Hausdorff metric and the space of closed subsets of $X$ with the Effros Borel structure, respectively.   
	The following, easy to prove statements will be used:
	
	\begin{fact}
		\label{f:compact}
		Let $X$ be a Polish space and $F \subset X$ be closed. 
		Then 
		\begin{enumerate}
			\item the map $\Bairespace \to \mathcal{P}(\Ztoinf)$ (that is, the Polish space of the Borel probability measures on $\Ztoinf$) defined by $b \mapsto \mu_b$
			\item the map $\Bairespace \times \mathcal{K}(\Ztoinf) \to \mathbb{R}$ defined by $(b,K) \mapsto \mu_b(K)$
			\item the map $ \HyperCompact{X \times \Bairespace} \to \HyperCompact{X}$ defined by $K\mapsto \Project{X}(K)$
			\item \label{p:last} the set $\{K \in \HyperCompact{X}:K \subset F\}$
		\end{enumerate}
		is Borel. 
	\end{fact}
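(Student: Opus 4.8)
The plan is to treat the four clauses separately, in each case reducing to the subbasic generators of the relevant Borel structure. I will use throughout two standard facts: the Borel $\si$-algebra of $\mathcal{P}(\Ztoinf)$ is generated by the evaluations $\nu \mapsto \nu(U)$ for $U \subseteq \Ztoinf$ open (\cite[Section~17.E]{kechrisbook}), and for a Polish space $Y$ the Hausdorff-metric topology on $\HyperCompact{Y}$ has as a subbasis the sets $\{K : K \subseteq V\}$ and $\{K : K \cap V \ne \emptyset\}$ with $V \subseteq Y$ open, each of which is open, hence Borel. I will also use that $\Ztoinf$ is zero-dimensional: writing $N_s = \{x \in \Ztoinf : s \subset x\}$ for $s \in \Z^{<\om}$, every open $U \subseteq \Ztoinf$ is a countable union of basic clopen sets $N_s$, and such a union can be taken pairwise disjoint.

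For (1), I would fix an open $U \subseteq \Ztoinf$, write it as a disjoint union $\bigcup_{s \in S_U} N_s$ with $S_U$ countable, and observe that since $\mu_b$ is the product of the uniform probability measures on the finite sets $\{0,1,\dots,b(n)\}$, one has $\mu_b(N_s) = \prod_{i < |s|} \frac{\mathbf{1}[\,0 \le s(i) \le b(i)\,]}{b(i)+1}$, which depends on only finitely many coordinates of $b$ and so is a continuous function of $b$. Hence $b \mapsto \mu_b(U) = \sum_{s \in S_U} \mu_b(N_s)$ is a countable sum of nonnegative continuous functions, and is therefore Borel; since this holds for every open $U$, the map $b \mapsto \mu_b$ is Borel.

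For (2), given $K \in \HyperCompact{\Ztoinf}$ and $n \in \om$, I would set $K{\restriction}n = \{s \in \Z^n : K \cap N_s \ne \emptyset\}$ and $[K{\restriction}n] = \bigcup_{s \in K{\restriction}n} N_s$. Compactness of $K$ makes $K{\restriction}n$ finite; the clopen sets $[K{\restriction}n]$ decrease with $n$; and $\bigcap_n [K{\restriction}n] = K$, so $\mu_b(K) = \inf_n \mu_b([K{\restriction}n])$. For each fixed $s$ the set $\{K : s \in K{\restriction}n\} = \{K : K \cap N_s \ne \emptyset\}$ is open in $\HyperCompact{\Ztoinf}$, so $(b,K) \mapsto \mu_b([K{\restriction}n]) = \sum_{s \in \Z^n} \mathbf{1}[\,s \in K{\restriction}n\,]\,\mu_b(N_s)$ is a countable sum of nonnegative Borel functions of $(b,K)$, hence Borel; taking the infimum over $n$ then shows $(b,K) \mapsto \mu_b(K)$ is Borel. (Alternatively, one may compose the map from (1) with the standard fact that $(\nu,K) \mapsto \nu(K)$ is Borel on $\mathcal{P}(\Ztoinf) \times \HyperCompact{\Ztoinf}$.)

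For (3) and (4): the projection of a compact set is compact, so $K \mapsto \Project{X}(K)$ is well defined, and for open $V \subseteq X$ one has $\{K : \Project{X}(K) \subseteq V\} = \{K : K \subseteq V \times \Bairespace\}$ and $\{K : \Project{X}(K) \cap V \ne \emptyset\} = \{K : K \cap (V \times \Bairespace) \ne \emptyset\}$, both open in $\HyperCompact{X \times \Bairespace}$, so $\Project{X}$ is in fact continuous and hence Borel. Likewise, for closed $F \subseteq X$ the set $\{K \in \HyperCompact{X} : K \subseteq F\}$ equals $\HyperCompact{X} \setminus \{K : K \cap (X \setminus F) \ne \emptyset\}$, the complement of an open set, hence Borel. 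I do not expect a genuine obstacle anywhere; the only point needing a little care is in (2), where $\mu_b(K)$ must be exhibited as a \emph{countable} infimum of Borel functions, and the clopen approximants $[K{\restriction}n] \downarrow K$ supplied by zero-dimensionality are precisely what makes that possible.
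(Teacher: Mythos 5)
Your proof is correct, and all four verifications (the local-constancy of $b\mapsto\mu_b(N_s)$, the clopen approximants $[K{\restriction}n]\downarrow K$ giving a countable infimum, and the subbasis computations for (3) and (4)) are sound. The paper states this fact without proof, labelling it ``easy to prove,'' so there is nothing to compare against; your argument is the standard one the authors evidently have in mind, and in fact establishes more than claimed, since (3) is continuous and (4) is closed.
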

	
	For $f, g \in \Bairespace$ we will write $f \leq^* g$ if $f(n) \leq g(n)$ holds for each $n \in \N$ with finitely many exceptions. Recall that a set $S \subset \Bairespace$ is \emph{dominating} if it is cofinal in $\leq^*$. The $\sigma$-ideal of non-dominating sets is denoted by $\mathcal{ND}$.

	The following fact, essentially proved in \cite{brendle}, will play a crucial role. 
	
	\begin{fact}
		\label{f:contrest} Assume that $A \in \bs(\Bairespace)$ is a dominating set and $f:A \to \Bairespace$ is a Borel function. Then there exists a closed set $C \subset A$ such that $C \not \in \mathcal{ND}$ and $f\restriction C$ is continuous. 
	\end{fact}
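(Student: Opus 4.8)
The plan is to reduce Fact~\ref{f:contrest} to the function-free statement---essentially due to Brendle--Hjorth--Spinas \cite{brendle}---that every analytic dominating subset of $\Bairespace$ contains a dominating (i.e., not in $\mathcal{ND}$) closed subset, carrying the control of $f$ along by \emph{unfolding} simultaneously the analyticity of $A$ and the Borelness of $f$ into one closed set. Concretely: since $f$ is Borel and $A$ is analytic, $\graph(f)\subseteq\Bairespace\times\Bairespace$ is analytic, so there is a closed set $\widehat F\subseteq\Bairespace\times\Bairespace$ with $\Project{1}(\widehat F)=A$ such that the second coordinate of a point of $\widehat F$ codes (via a fixed homeomorphism $\Bairespace\times\Bairespace\cong\Bairespace$) both the value $f(x)$ and a witness to $x\in A$; in particular there is a \emph{continuous} map $g\colon\widehat F\to\Bairespace$ with $g(x,p)=f(x)$ for all $(x,p)\in\widehat F$, namely the map reading off the $f$-part of the second coordinate.

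The core step is an ``unfolded dominating Hurewicz'' statement: if $\widehat F\subseteq\Bairespace\times\Bairespace$ is closed and $\Project{1}(\widehat F)$ is dominating, then there is a closed set $\widehat D\subseteq\widehat F$ such that $\Project{1}\restriction\widehat D$ is a homeomorphism onto $C:=\Project{1}(\widehat D)$ and $C$ is a dominating closed subset of $\Bairespace$. Granting this, the fact follows at once: $C\subseteq A$ is closed and not in $\mathcal{ND}$, and for $y\in C$ we have $f(y)=g\bigl((\Project{1}\restriction\widehat D)^{-1}(y)\bigr)$, a composition of continuous maps, so $f\restriction C$ is continuous.

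To prove the core step---this is where \cite{brendle} enters, and what I expect to be the main obstacle---one builds, by a fusion argument, a tree $T\subseteq\omega^{<\omega}$ together with a level-preserving, $\subseteq$-monotone assignment $s\mapsto\pi(s)\in\omega^{<\omega}$ (a ``second-coordinate stub'') such that $(s,\pi(s))$ lies in the tree of $\widehat F$ for every $s\in T$, and such that, after a finite stem, every node of $T$ is $\omega$-splitting with unbounded successor values. The last property forces $[T]$ to be dominating: from any node one can reach a successor whose value exceeds any prescribed number, so one can build a branch of $T$ eventually above any prescribed function. Since $\pi$ is level-preserving, $\widehat D:=\{(x,\bigcup_n\pi(x\restriction n)):x\in[T]\}$ is the graph of a continuous function on the closed set $C=[T]\subseteq A$, which supplies the required homeomorphism. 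The delicate point is to keep nodes $\omega$-splitting while staying inside $\widehat F$ and maintaining a ``dominating reservoir'': here one uses that $\mathcal{ND}$ is a $\sigma$-ideal---so that at each stage of the fusion some subcone of the current approximation still projects to a dominating set---combined with the tree-of-attempts analysis of the (now closed) set $\widehat F$, exactly as in the Hurewicz-type dichotomy for $\mathcal{ND}$ in \cite{brendle}.
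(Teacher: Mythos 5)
Your overall architecture is a legitimate alternative to the paper's: instead of first extracting from \cite{brendle} a closed dominating subset of $A$ parametrized by $\Bairespace$ and then forcing continuity of $f$ by a second, separate fusion (the paper restricts the Borel map $f\circ\phi_{\omega^{<\omega},W}$ to the branches of a Laver tree via the continuous reading of names for Laver forcing, \cite[Example 3.7]{hrusak}), you unfold $A$ and $f$ simultaneously into one closed set $\widehat F$ and invoke a single ``unfolded'' version of the Brendle--Hjorth--Spinas theorem. The unfolding in your first step is correct ($\graph(f)$ is analytic, so such an $\widehat F$ and a continuous read-off map $g$ exist), and the composition $f\restriction C=g\circ(\Project{1}\restriction\widehat D)^{-1}$ does yield continuity, granted your core step. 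That core step, as a \emph{statement}, is true and is essentially what the construction in \cite{brendle} produces; the trade-off is that you must reopen that proof to extract the unfolded form, whereas the paper only needs the packaged conclusion plus a standard forcing fact.

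However, the fusion you sketch for the core step would fail as written. You build $T\subseteq\omega^{<\omega}$ with $C=[T]\subseteq A$ (equivalently, $T$ a subtree of the first-coordinate tree of $\widehat F$) in which every node past a finite stem is $\omega$-splitting with unbounded successor values. No such tree need exist: take $A=\{x\in\Bairespace:\forall n\ x(2n+1)=x(2n)\}$, which is closed and dominating, yet every node of its tree of odd length $2n+1$ admits a \emph{unique} immediate successor value (namely $x(2n)$), so no subtree with $[T]\subseteq A$ is $\omega$-splitting off a stem. This is exactly why the paper (following \cite{brendle}) does not take $C=[T]$ but rather $C=C_{T,W}$: the ``large'' choices are recorded on finite blocks $w_\sigma$ of coordinates via the stubs $s_\sigma$, the abstract Laver tree $T$ only indexes these block decisions, and $\phi_{T,W}:[T]\to C_{T,W}$ is a continuous open parametrization. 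With that block-coded scheme in place of your level-by-level one (and with the second-coordinate stubs $\pi(s)$ required only to have length tending to infinity along branches, rather than to be level-preserving), your argument goes through; without it, the step ``every node of $T$ is $\omega$-splitting inside $A$'' cannot be achieved for a general dominating $A$.
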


	\begin{proof}[Sketch of the proof.]
	 Let $W=\{w_\sigma,s_\sigma:\sigma \in \omega^{<\omega}\}$ be a collection with the following properties: $w_\sigma \subset \omega$ and $\dom(s_\emptyset)\subset \omega$ are finite, $s_\emptyset:\dom(s_\emptyset) \to \omega$, and for $\sigma \not = \emptyset $ we have $s_\sigma: w_{\sigma\restriction \lh(\sigma)-1} \to \omega$ and for all $i \in w_{\sigma\restriction \lh(\sigma)-1}$ $s_\sigma(i)>\sigma(\lh(\sigma)-1)$, finally, for every $x \in \Bairespace$ we have that $\omega=\dom(s_\emptyset)\cup \bigcup_n w_{x \restriction n}$, where the union is disjoint. If $T \subset \omega^{<\omega}$ is a tree, define $C_{T,W}=$\[\{y\in \Bairespace:s_\emptyset \subset y \text{ and }\exists x \in [T] \ \forall n \in \omega (y\restriction w_{x \restriction n}= s_{x \restriction n+1}) \}.\]
	 
	 It is easy to see that the map $\phi_{T,W}:[T] \to C_{T,W}$ defined by assigning to $x \in [T]$ the unique $y \in C_{T,W}$ with the property that $\forall n \in \omega (y\restriction w_{x \restriction n}= s_{x \restriction n+1})$ is continuous and open. Moreover, one can also check that the set $C_{T,W}$ is closed for every $T$ and $W$.
	 
	 A \emph{Laver-tree} is a subtree $T$ of $\omega^{<\omega}$ such that it has a stem $s$ (that is, a maximal node with $\forall t \in T \ (t \subset s \lor s \subset t)$), and for every $t \not \subset s$ from $T$ the set $\{n\in \omega: t \concatt (n) \in T\}$ is infinite.

		In \cite{brendle} it is shown that $A$ contains a set of the form $C_{\omega^{<\omega},W}$. Consider now the Borel map $f \circ \phi_{\omega^{<\omega},W}:\Bairespace \to \Bairespace$. By \cite[Example 3.7]{hrusak} there exists a Laver-tree $T \subset \omega^{<\omega}$ such that 
		$f \circ \phi_{\omega^{<\omega},W} \restriction [T]$ is continuous. Clearly, $\phi_{T,W}=\phi_{\omega^{<\omega},W} \restriction [T]$, and since this map is open, $f\restriction \phi_{T,W}([T])(=C_{T,W})$ is continuous. Thus, it is enough to check that the set $C_{T,W}$ is dominating. 
		
		Let $z \in \omega^\omega$ be arbitrary. It is not hard to define an $x \in T$ such that $\phi_{T,W}(x) \geq^* z$ inductively: indeed, for every large enough $n$ (namely, if $n>\lh(s)$, where $s$ is the stem of $T$), if $x \restriction n$ is defined, we can find an $m$ so that $m>\max\{z(i):i \in w_{x \restriction n}\}$ and $x \restriction n \concatt (m) \in T$. Then for an $x$ obtained this way, it follows from $\forall i \in  w_{x \restriction n} \ (m<s_{x \restriction n+1}(i))$ that whenever $n$ is large enough and $i \in w_{x \restriction n}$ then $\phi_{T,W}(x)(i)>z(i)$. The fact that $|w_\sigma|<\aleph_0$ implies that $\phi_{T,W}(x)$ dominates $z$.  \end{proof}
	
	We will also make use of another consequence of the results in \cite{brendle}:
	
	\begin{fact}
		\label{f:complexityof} Let $A \subset \Bairespace \times \Bairespace$ be a $\bSigma{1}{1}$ set. Then the set $S_{\nDom}=\{x: A_x \in \nDom\}$ is $\mathbf{\Delta}^1_2$.
	\end{fact}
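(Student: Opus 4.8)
The plan is to show that $S_{\nDom}$ lies in both $\bSigma{1}{2}$ and $\bPi{1}{2}$. Fix a closed $B\subset\Bairespace\times\Bairespace\times\Bairespace$ with $(x,y)\in A\iff\exists t\,(x,y,t)\in B$, and for $x\in\Bairespace$ put $B_x=\{(y,t):(x,y,t)\in B\}$, so $A_x=\{y:\exists t\,(x,y,t)\in B\}$. For the $\bSigma{1}{2}$ bound, note that $A_x$ fails to be dominating precisely when some $z$ is dominated by no member of $A_x$, i.e.
\[
A_x\in\nDom \iff \exists z\,\forall y\,\big(y\notin A_x\ \lor\ \exists^{\infty}n\ y(n)<z(n)\big).
\]
Since $A\in\bs$, the relation $y\notin A_x$ is $\bp$ in $(x,y)$ and the second disjunct is Borel, so the matrix is $\bp$, the inner $\forall y$ preserves this, and the leading $\exists z$ places $S_{\nDom}$ in $\bSigma{1}{2}$. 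This half uses only that $A$ is analytic.

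For the $\bPi{1}{2}$ bound one must show that $D:=\{x:A_x\ \text{is dominating}\}$ is $\bSigma{1}{2}$, and here the combinatorics behind Fact~\ref{f:contrest} are essential. The naive reduction ``$A_x$ is dominating iff it contains a closed dominating subset'' only yields $\bSigma{1}{3}$, since for analytic $A$ an inclusion $C\subseteq A_x$ is in general only $\bPi{1}{2}$, and ``$C$ is dominating'' is itself $\bPi{1}{2}$. The fix is to use the \emph{explicit} dominating sets $C_{T,W}$ together with their parametrizations $\phi_{T,W}$ from the proof of Fact~\ref{f:contrest} -- for these, being dominating is automatic once $T$ is a Laver tree -- and, simultaneously, a \emph{continuous} choice of $B$-witnesses. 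Concretely, I claim:
\begin{quote}
$A_x$ is dominating if and only if there exist a Laver tree $T\subset\omega^{<\omega}$, a system $W=\{w_\sigma,s_\sigma:\sigma\in\omega^{<\omega}\}$ satisfying the conditions in the proof of Fact~\ref{f:contrest}, and a code $c$ for a continuous $g_c$ defined on $[T]$, such that $(x,\phi_{T,W}(x'),g_c(x'))\in B$ for every $x'\in[T]$.
\end{quote}

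Granting the claim, $D$ is $\bSigma{1}{2}$: using the standard coding of continuous functions by monotone maps on $\omega^{<\omega}$, each of ``$T$ is a Laver tree'', ``$W$ satisfies its conditions'' (including the clause $\omega=\dom(s_\emptyset)\cup\bigcup_n w_{x'\restriction n}$ valid for all $x'$), ``$g_c$ is defined at $x'$'', and ``$(x,\phi_{T,W}(x'),g_c(x'))\in B$'' is arithmetical once $T,W,c$ and the branch $x'$ are fixed, while every real quantifier over $x'$ occurring is universal; hence the whole bracketed statement is $\bp$ in $(x,T,W,c)$, and prefixing $\exists T\,\exists W\,\exists c$ gives a $\bSigma{1}{2}$ description of $D$. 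With the first paragraph this yields $S_{\nDom}\in\bdelta{1}{2}$.

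It remains to prove the claim. $(\Leftarrow)$: the set $\{\phi_{T,W}(x'):x'\in[T]\}$ equals $C_{T,W}$, which is contained in $A_x$ (each $\phi_{T,W}(x')$ is $B$-witnessed by $g_c(x')$) and which is dominating for every Laver tree $T$ and admissible $W$ -- this is exactly the computation in the last paragraph of the proof of Fact~\ref{f:contrest} (above the stem, the infinite branching of $T$ lets one pick $x'(n)>\max\{z(i):i\in w_{x'\restriction n}\}$, whence $s_{x'\restriction n+1}(i)>x'(n)$ forces $\phi_{T,W}(x')(i)>z(i)$, and $|w_\sigma|<\aleph_0$ gives $\phi_{T,W}(x')\geq^{*}z$); hence $A_x$ is dominating. $(\Rightarrow)$: if $A_x$ is dominating then, being analytic and dominating, by \cite{brendle} it contains some $C_{\omega^{<\omega},W}$. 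Take a Jankov--von Neumann uniformization $u$ of $B_x$ over $A_x$ (so $(x,y,u(y))\in B$ for $y\in A_x$); then $v:=u\circ\phi_{\omega^{<\omega},W}$ is $\sigma(\bs)$-measurable, hence has the Baire property, and $(x,\phi_{\omega^{<\omega},W}(x'),v(x'))\in B$ for all $x'$. As in the proof of Fact~\ref{f:contrest} (cf.\ \cite[Example~3.7]{hrusak}; the argument applies to any function with the Baire property, being continuous on a comeager set and every comeager set containing $[T]$ for a Laver tree $T$), there is a Laver tree $T$ with $v\restriction[T]$ continuous, and since $\phi_{T,W}=\phi_{\omega^{<\omega},W}\restriction[T]$ we may take $g_c=v\restriction[T]$. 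The delicate point -- exactly what prevents ``$C_{T,W}$ dominating'' and ``$C_{T,W}\subseteq A_x$'' from each costing a real quantifier -- is this order of operations: first fix $W$, obtaining a dominating closed subset of $A_x$ of the prescribed combinatorial shape, and only then thin to a Laver tree along which the uniformizing function is continuous.
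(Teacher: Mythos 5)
Your $\bSigma{1}{2}$ half is the same quantifier count as the paper's. For the $\bPi{1}{2}$ half you take a genuinely more explicit route than the paper: the paper simply writes $x\in S_{\nDom}\iff\forall C\in\mathcal{F}(\Bairespace)\,(C\in\nDom\lor C\not\subset A_x)$ and quotes two black boxes from \cite{brendle} (a dominating analytic set contains a closed dominating set, and $\{C\in\mathcal{F}(\Bairespace):C\in\nDom\}$ is $\bdelta{1}{2}$, the latter ``noted by Solecki''), whereas you unfold the Brendle--Hjorth--Spinas construction and build the witness $(T,W,c)$ directly into a $\bSigma{1}{2}$ definition of $\{x:A_x\text{ dominating}\}$. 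That unfolding is a legitimate idea, and your complexity bookkeeping for the resulting formula is right; but it is exactly in the unfolding that a genuine gap appears.

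The gap is in the $(\Rightarrow)$ direction of your claim, where you pass from the Jankov--von Neumann uniformization $v$ (which is only $\sigma(\bs)$-measurable, not Borel) to a Laver tree $T$ with $v\restriction[T]$ continuous. Your justification is that $v$ has the Baire property, hence is continuous on a comeager set, and that ``every comeager set contain[s] $[T]$ for a Laver tree $T$.'' That last assertion is false: the comeager set $G=\{x\in\Bairespace:\exists^{\infty}n\ x(n)=0\}$ contains no $[T]$ with $T$ a Laver tree, because above the stem every node of a Laver tree has infinitely many immediate successors, hence one whose last entry is nonzero, so $T$ always has a branch that is eventually nonzero and thus lies outside $G$. (This is the same phenomenon as the Laver real not being Cohen.) The statement you actually need --- that a $\sigma(\bs)$-measurable function is continuous on the branches of some Laver tree --- is not a consequence of the Baire property; it requires the Laver-measurability of analytic sets together with a fusion argument, and \cite[Example~3.7]{hrusak} as invoked in the paper only covers Borel functions. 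Until this step is repaired (either by that measurability-plus-fusion argument, or by retreating to the paper's formulation via the $\bdelta{1}{2}$-ness of the closed non-dominating sets), your $\bPi{1}{2}$ bound does not go through.
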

	\begin{proof}
		Counting the quantifiers in the definition of non-dominating sets gives that the set $S_{\nDom}$ is $\bSigma{1}{2}$. Now by \cite{brendle}, $x \in S_{\nDom}$ holds if and only if $\forall C \in \mathcal{F}(\Bairespace) \ (C \in \nDom \lor C \not \subset A_x)$. As it has been noted by Solecki, it follows from the construction in \cite{brendle} that the set $\{C\in \mathcal{F}(\Bairespace): C \in \nDom \}$ is $\bdelta{1}{2}$, thus, a straightforward calculation yields that $S_{\nDom}$ is $\bPi{1}{2}$ as well. 
	\end{proof}
	
	A connection between $\nDom$ and $\mathcal{HN}$ has already been established by Solecki \cite{soleckihaar2}. We will use the following:

	\begin{lemma}
		\label{l:coneof}
		Let $S \subset \Ztoinf$ be a Haar-null set. There exists a $b \in \Bairespace$ such that for every $b' \geq^* b$ we have that $\mu_{b'}$ is a witness for $S \in \mathcal{HN}$. 
	\end{lemma}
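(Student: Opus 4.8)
The plan is to combine Solecki's theorem on the special form of witness measures with a ``smearing'' (convolution) argument. By the cited result of Solecki \cite{soleckihaar2} we may fix a universally measurable set $U\supseteq S$ and a witness measure for $S\in\mathcal{HN}$ of product form $\mu=\bigotimes_{n}\lambda_n$, where each $\lambda_n$ is a probability measure on a finite subset of $\mathbb{Z}$; since $\Ztoinf$ is abelian this means $\mu(U+t)=0$ for every $t\in\Ztoinf$. Translating each $\lambda_n$ separately only replaces $\mu$ by a translate of itself, and a translate of a witness for $S$ is again a witness for $S$ via the same $U$, so we may assume $0\in\supp(\lambda_n)$; write $d_n$ for the largest element of $\supp(\lambda_n)$. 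Finally choose $b\in\Bairespace$ with $\sum_n \tfrac{d_n}{b(n)+1}<\infty$, for instance $b(n)=2^n(d_n+1)$.

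Now fix $b'\geq^* b$; we must show $\mu_{b'}(U+t)=0$ for all $t$. Put $\nu=\mu\star\mu_{b'}$, the convolution in $\Ztoinf$. Convolving a witness measure with an arbitrary Borel probability measure again yields a witness (for the same $U$), since $\nu(U+t)=\int\mu(U+(t-x))\,d\mu_{b'}(x)=0$. Hence it is enough to prove $\mu_{b'}\ll\nu$: then $\nu(U+t)=0$ forces $\mu_{b'}(U+t)=0$. Writing $u_k$ for the uniform probability measure on $\{0,\dots,k\}$, we have $\mu_{b'}=\bigotimes_n u_{b'(n)}$, and since the convolution of product measures is the product of the coordinatewise convolutions, $\nu=\bigotimes_n(\lambda_n\star u_{b'(n)})$.

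The absolute continuity $\mu_{b'}\ll\nu$ is a coordinatewise matter. One checks directly that $\lambda_n\star u_{b'(n)}$ and $u_{b'(n)}$ assign the same mass $\tfrac1{b'(n)+1}$ to every point of the ``core'' $\{d_n,\dots,b'(n)\}$, while $u_{b'(n)}\big(\{0,\dots,d_n-1\}\big)=\tfrac{d_n}{b'(n)+1}$, and $0\in\supp(\lambda_n)$ guarantees that $\lambda_n\star u_{b'(n)}$ charges every point of $\{0,\dots,b'(n)\}$, so $u_{b'(n)}\ll\lambda_n\star u_{b'(n)}$. Since $\sum_n\tfrac{d_n}{b'(n)+1}<\infty$ (the finitely many terms with $b'(n)<b(n)$ are harmless, and the rest are dominated by $\sum_n\tfrac{d_n}{b(n)+1}$), Kakutani's dichotomy for infinite product measures applies: the Hellinger affinities $\rho_n$ of $u_{b'(n)}$ and $\lambda_n\star u_{b'(n)}$ satisfy $1-\rho_n\leq\tfrac{d_n}{b'(n)+1}$, so $\sum_n(1-\rho_n)<\infty$ and therefore $\mu_{b'}\ll\nu$. (One can also avoid citing Kakutani: by Borel--Cantelli a $\mu_{b'}$-generic $x$ lies in the core in all but finitely many coordinates, so decompose $\mu_{b'}$ into the countably many pieces indexed by the finite set of exceptional coordinates, and note that each piece differs from the corresponding piece of $\nu$ in only finitely many factors, where $u_{b'(n)}\ll\lambda_n\star u_{b'(n)}$, hence is absolutely continuous with respect to $\nu$.)

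The delicate point, and the reason the statement is about a cone $\{b':b'\geq^* b\}$ rather than a single $b$, is the behaviour at the ``edges'' of the cubes: because $\lambda_n\star u_{b'(n)}$ is not uniform, $\mu_{b'}$ is \emph{not} dominated by $\nu$ with a bounded Radon--Nikodym derivative — that would need $\sum_n(1-\lambda_n(\{0\}))<\infty$, which Solecki's theorem does not provide — and the absolute continuity of these two infinite product measures genuinely requires the summability $\sum_n\tfrac{d_n}{b'(n)+1}<\infty$, which is exactly what passing to the cone above $b$ buys. Conceptually the crux is that one needs a \emph{witness} measure that \emph{dominates} $\mu_{b'}$, so that $\mu_{b'}$ inherits its null sets; $\mu\star\mu_{b'}$ works precisely because, after the smearing, it is comparable to $\mu_{b'}$ while remaining a witness. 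Checking that comparability carefully is the only real work; the facts that ``witness $\star$ probability $=$ witness'' and ``convolution respects product decompositions'' are routine.
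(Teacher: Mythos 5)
Your argument is correct, but it takes a genuinely different route from the paper's. The paper disposes of the lemma by inspecting an external construction: it observes that in the proof of \cite[Theorem 3.1]{donat} (applied to a Borel Haar-null superset of $S$) only lower bounds are ever imposed on the parameters $N(n)$, and hence on $a(n)$, so the resulting $b=(a(n))_{n\in\N}$ automatically works for the whole cone $\{b':b'\geq^* b\}$. You instead start from Solecki's theorem that a Haar-null subset of $\Ztoinf$ admits a witness of product form $\bigotimes_n\lambda_n$ with finitely supported factors, and run a self-contained smearing argument: $\mu\star\mu_{b'}$ is still a witness for the same universally measurable superset, and $\mu_{b'}\ll\mu\star\mu_{b'}$ once $\sum_n d_n/(b'(n)+1)<\infty$, by the Kakutani dichotomy or by your Borel--Cantelli decomposition. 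Your version is longer but makes the mechanism transparent --- in particular \emph{why} the conclusion is about a cone and why uniform measures on boxes suffice --- whereas the paper's is shorter but requires the reader to reopen \cite{donat}. Two points to tighten: (i) you should normalize $\min\supp(\lambda_n)=0$ rather than merely $0\in\supp(\lambda_n)$ (or replace $d_n$ by the diameter of $\supp(\lambda_n)$), since your identification of the core $\{d_n,\dots,b'(n)\}$ and the mass bound $d_n/(b'(n)+1)$ for its complement both use $\supp(\lambda_n)\subset[0,d_n]$; (ii) the form of Kakutani's theorem you need is the one-sided version for $P_n\ll Q_n$, since $u_{b'(n)}$ and $\lambda_n\star u_{b'(n)}$ are not mutually absolutely continuous --- this version is standard but worth stating, or you can simply keep only the Borel--Cantelli argument, which is elementary and already complete.
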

	Let us remark first that this statement has been implicitly proved in \cite{soleckihaar2} and used without proof in \cite{banakhhaar}. We will indicate how to show it using a slightly different argument from \cite{donat}.
	\begin{proof}[Sketch of the proof.]
		It is not hard to see that in order to establish the lemma it suffices to produce a $b$ such that for every $b' \geq b$ the measure $\mu_{b'}$ is a witness for $S \in \mathcal{HN}$. Now, one can check that in the proof of \cite[Theorem 3.1]{donat} only lower bounds are imposed on the sequence $(N(n))_{n \in \N}$ and consequently on the sequence $(a(n))_{n \in \N}$ as well. Applying this observation and \cite[Theorem 3.1]{donat} for a Borel Haar-null $B \supset S$, the choice $b=(a(n))_{n \in \N}$ yields the lemma.
	\end{proof}

	In this paper solely the group $\Ztoinf$ will be considered, and so we will use the additive notation for the group operation. 
	
	For an integer-valued function $f:X \to \mathbb{Z}$ the notation $|f|$ and $cf$ will be used for the function defined by $x \mapsto |f(x)|$ and $x \mapsto cf(x)$ for $x \in X$ and $c \in \mathbb{Z}$. Also we will write $f \leq g$ if for each $x \in X$ we have $f(x)\leq g(x)$. 
	
	\section{Complexity estimation}
    As a warm up, we calculate the complexity of the codes of Haar-null analytic subsets of  and the complexity of the closed Haar null subsets of $\Ztoinf$ in the Effros Borel space. It has been shown by Solecki \cite{soleckihaar2}, see also \cite{matheron,toden}, that the codes for the closed Haar-null subsets, as well as the set $\{C \in \HyperClosed{\Ztoinf}:C \in \mathcal{HN}\}$ are neither analytic nor co-analytic. (In fact, $\Ztoinf$ can be replaced by any non-locally compact Polish group admitting a two-sided invariant metric). So the next result is sharp.
	\begin{theorem}
		\label{t:complexity}
		The codes for Haar-null analytic subsets of $\Ztoinf$ form a $\bdelta{1}{2}$ subset, which is neither analytic nor co-analytic. More precisely, if $U \subset \Bairespace \times \Ztoinf$ is a $\bs$ set then the set $S=\{x \in \Bairespace:U_x \in \mathcal{HN}\}$ is $\mathbf{\Delta}^1_2 $ and there are closed sets $U$, for which this set is neither analytic nor co-analytic.  Moreover, the set $\{C \in \HyperClosed{\Ztoinf}:C \in \mathcal{HN}\}$ is also $\mathbf{\Delta}^1_2$.
	\end{theorem}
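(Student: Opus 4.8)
The plan is to reduce the question whether $U_x$ is Haar-null to the question whether an associated analytic family of subsets of $\Bairespace$ is non-dominating, and then to quote Fact~\ref{f:complexityof}. Fix a $\bs$ set $U\subseteq\Bairespace\times\Ztoinf$ and write $U=\proj_{\Bairespace\times\Ztoinf}(F)$ for a closed $F\subseteq\Bairespace\times\Ztoinf\times\Bairespace$, so $U_x=\proj_{\Ztoinf}(F_x)$ with $F_x$ closed in $\Ztoinf\times\Bairespace$; for $g\in\Ztoinf$ put $g+F_x=\{(g+y,z):(y,z)\in F_x\}$, a closed set depending in a Borel way on $(x,g)$. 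As $\Ztoinf$ is abelian and $U_x$ is universally measurable, a measure $\mu$ witnesses $U_x\in\mathcal{HN}$ iff $\mu(g+U_x)=0$ for every $g\in\Ztoinf$. I would then consider
\[
A_U=\bigl\{(x,b)\in\Bairespace\times\Bairespace:\exists g\in\Ztoinf\ \exists M\in\HyperCompact{\Ztoinf\times\Bairespace}\ \bigl(M\subseteq g+F_x\ \wedge\ \mu_b(\proj_{\Ztoinf}(M))>0\bigr)\bigr\}
\]
and check, using the inner regularity of $\mu_b$ on analytic sets together with Fact~\ref{f:unif} (applied with $X=\Ztoinf$, $Y=\Bairespace$ and the closed set $g+F_x$ in place of $F$), that $(A_U)_x=\{b:\mu_b(g+U_x)>0\text{ for some }g\}$, i.e.\ $(A_U)_x$ is exactly the set of $b$ for which $\mu_b$ does not witness $U_x\in\mathcal{HN}$.

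The first thing to verify is that $A_U$ is $\bs$. Indeed, ``$M\subseteq g+F_x$'' is Borel, since $\HyperCompact{\cdot}\hookrightarrow\HyperClosed{\cdot}$ and the map $(x,g)\mapsto g+F_x$ are Borel, and for varying closed sets $C,D$ the relation $C\subseteq D$ is Borel in the Effros structure (as $C\not\subseteq D$ iff some basic open set meets $C$ and misses $D$); moreover ``$\mu_b(\proj_{\Ztoinf}(M))>0$'' is Borel by parts~(3) and~(2) of Fact~\ref{f:compact}. Hence $A_U$ is a projection of a Borel set, so $\bs$. This is exactly the step where the closed code $F_x$ must be used in place of $U_x$ itself: the ``obvious'' relation ``$L\subseteq g+U_x$'' for compact $L$ and analytic $U_x$ is merely $\bPi{1}{2}$, which would make $(A_U)_x$ depend on $x$ in a way too complex for Fact~\ref{f:complexityof}, and it is Fact~\ref{f:unif} that lets one replace it by the Borel relation above.

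Next I would prove $U_x\in\mathcal{HN}\iff (A_U)_x\in\mathcal{ND}$. If $U_x\notin\mathcal{HN}$ then no measure at all witnesses it, so no $\mu_b$ does, i.e.\ $(A_U)_x=\Bairespace$, which is dominating. If $U_x\in\mathcal{HN}$, then Lemma~\ref{l:coneof} gives $b_0\in\Bairespace$ with $\mu_{b'}$ a witness for every $b'\geq^* b_0$, so $(A_U)_x\subseteq\{b':b'\not\geq^* b_0\}$; this last set is non-dominating, since no member of it is $\geq^*$-above $b_0$, and as $\mathcal{ND}$ is an ideal we get $(A_U)_x\in\mathcal{ND}$. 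Now Fact~\ref{f:complexityof} applied to $A_U$ yields that $S=\{x:U_x\in\mathcal{HN}\}=\{x:(A_U)_x\in\mathcal{ND}\}$ is $\bdelta{1}{2}$. For the ``Moreover'' statement I would run the same argument with the trivial code, i.e.\ with $A=\bigl\{(C,b)\in\HyperClosed{\Ztoinf}\times\Bairespace:\exists g\ \exists L\in\HyperCompact{\Ztoinf}\ (L\subseteq g+C\wedge\mu_b(L)>0)\bigr\}$, which is again $\bs$ and even simpler since $g+C$ is already closed, after fixing a Borel isomorphism $\Bairespace\cong\HyperClosed{\Ztoinf}$ to put $A$ into the shape required by Fact~\ref{f:complexityof}; this gives that $\{C\in\HyperClosed{\Ztoinf}:C\in\mathcal{HN}\}=\{C:A_C\in\mathcal{ND}\}$ is $\bdelta{1}{2}$.

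Finally, that these sets are neither analytic nor co-analytic is, for the closed sets, the cited theorem of Solecki; to transfer this to the first statement I would take $U\subseteq\Bairespace\times\Ztoinf$ to be a closed set universal for closed subsets of $\Ztoinf$, so that $r:x\mapsto U_x$ is a surjective Borel map onto $\HyperClosed{\Ztoinf}$ with $r(S)=\{C:C\in\mathcal{HN}\}$ and $r(\Bairespace\setminus S)=\{C:C\notin\mathcal{HN}\}$; since Borel images of analytic sets are analytic, $S$ analytic would make $\{C:C\in\mathcal{HN}\}$ analytic, and $S$ co-analytic would make $\{C:C\notin\mathcal{HN}\}$ analytic, both impossible. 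The main obstacle throughout is the bookkeeping of the second and third paragraphs — keeping every hyperspace operation Borel — and it is this that dictates both the passage to the closed code $F$ and the use of Fact~\ref{f:unif}.
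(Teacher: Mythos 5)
Your reduction for the main part is essentially the paper's: both arguments introduce the analytic family $A$ of pairs $(x,b)$ such that $\mu_b$ fails to witness $U_x\in\mathcal{HN}$, prove $U_x\in\mathcal{HN}\iff A_x\in\nDom$ exactly as you do (the nontrivial direction via Lemma~\ref{l:coneof} and cofinality of dominating sets), invoke Fact~\ref{f:complexityof}, and handle the Effros-space version by pulling back along a Borel isomorphism $\Bairespace\cong\HyperClosed{\Ztoinf}$. The only difference in the upper-bound argument is cosmetic: the paper gets analyticity of $A$ in one line from \cite[Theorem 29.27]{kechrisbook} (the map $(x,\mu)\mapsto\mu^*(U_x)$ is well behaved on analytic sets), whereas you rederive it through a closed code, inner regularity and Fact~\ref{f:unif}; that route is also fine. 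One justification in your second paragraph is wrong even though its conclusion is true: for closed $F\subseteq\Bairespace\times\Ztoinf\times\Bairespace$ the map $(x,g)\mapsto g+F_x$ is \emph{not} Borel into $\HyperClosed{\Ztoinf\times\Bairespace}$ (already $\{x:F_x\neq\emptyset\}=\Project{\Bairespace}(F)$ can be an arbitrary analytic set), so you cannot reduce ``$M\subseteq g+F_x$'' to the Borel subset relation between two Effros points. Instead use the compactness of $M$ directly: writing the open complement of $F$ as a countable union of boxes $A_n\times B_n$, one has $M\not\subseteq g+F_x$ iff $\exists n\,\bigl(x\in A_n\wedge(M\PlusProd(-g))\cap B_n\neq\emptyset\bigr)$, which is Borel; this is the uniform version of Fact~\ref{f:compact}\eqref{p:last}.

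The same false claim is fatal in your last paragraph. For a closed $U\subseteq\Bairespace\times\Ztoinf$ universal for closed subsets of $\Ztoinf$, the map $r:x\mapsto U_x$ is not Borel into $\HyperClosed{\Ztoinf}$: since $\Ztoinf$ is not $\sigma$-compact, $\{x:U_x=\emptyset\}$ is the set of codes of open covers of $\Ztoinf$, which is $\bPi{1}{1}$-complete, while $\{\emptyset\}$ is a Borel subset of $\HyperClosed{\Ztoinf}$. Hence ``Borel images of analytic sets are analytic'' does not apply to $r(S)$, and the transfer from the Effros-space hardness to the closed-code hardness breaks down as written. This does not endanger the theorem only because the transfer is unnecessary: Solecki's result, as quoted in the paper, already asserts non-(co-)analyticity both for the codes of closed Haar-null sets and for $\{C\in\HyperClosed{\Ztoinf}:C\in\mathcal{HN}\}$, and the paper (like you should) proves only the $\bdelta{1}{2}$ upper bound and cites \cite{soleckihaar2} for both lower bounds.
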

	\begin{proof}
		As mentioned above, by Solecki's results it is enough to prove that $S$ is $\bdelta{1}{2}$.
		By Fact \ref{f:complexityof} it suffices to define a $\bSigma{1}{1}$ subset $A$ of $\Bairespace \times \Bairespace$ with the property that $x \in S \iff A_x \in \nDom.$ Let \[(x,h) \in A \iff \exists g\in \Ztoinf (\mu_h(U_x+g)>0).\]
		It follows from \cite[Theorem 29.27]{kechrisbook} and Fact \ref{f:compact} that the set $A$ is $\bSigma{1}{1}$. 
		
		Let $x \in \Bairespace$ be arbitrary and assume that $U_x \not \in \mathcal{HN}$. We show that in this case $A_x=\Bairespace$. Indeed, for any $h \in \Bairespace$ the condition $U_x \not \in \mathcal{HN}$ implies the existence of a $g \in \Ztoinf$ with $\mu_{h}(U_x+g)>0$. 
		
		Now assume that $U_x \in \mathcal{HN}$, and towards a contradiction suppose that $A_x \not \in \nDom$. Then, we apply Lemma \ref{l:coneof} to $U_x$ and get a $b \in \Bairespace$. Pick an $h \in A_x$ such that $h \geq^* b$. Then on the one hand $\mu_h$ should witness that $U_x$ is Haar-null, on the other hand $\mu_h(U_x+g)>0$ for some $g \in \Ztoinf$, a contradiction.
		
		To see that the above argument implies that the set $\{C \in \HyperClosed{\Ztoinf}:C \in \mathcal{HN}\}$ is $\mathbf{\Delta}^1_2$, just fix a Borel isomorphism $\iota$ between $\Bairespace$ and $\HyperClosed{\Ztoinf}$. It is straightforward to check that the set $\{(C,h) \in \HyperClosed{\Ztoinf} \times \Ztoinf:h \in C\}$  is Borel and so is the set $B=\{(\iota(C),h) \in \HyperClosed{\Ztoinf} \times \Ztoinf:h \in C\}$. Now, using the first part of the statement, the set $\{x \in \Bairespace:\iota^{-1}(x)\in \mathcal{HN}\}=\{x \in \Bairespace:B_x\in \mathcal{HN}\}$ is $\mathbf{\Delta}^1_2$, and, consequently, its pullback under $\iota$, that is, the set $\{C \in \HyperClosed{\Ztoinf}:C \in \mathcal{HN}\}$ is $\mathbf{\Delta}^1_2$ as well.
	\end{proof}

	\section{The main result}

	In this section we prove Theorem \ref{t:mainintro}. Let us start with an easy observation. 
	\begin{lemma}
		Let $f \in \Bairespace$ be arbitrary. Then the set \[H(f)=\{g \in \Ztoinf: \exists^{\infty} n \in \omega \ |g(n)| \leq f(n) \}\] 
		is Haar-null in $\Ztoinf$.
		\label{l:nondom}
	\end{lemma}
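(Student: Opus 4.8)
The plan is to produce an explicit witness measure of the form $\mu_b$ for a sufficiently fast-growing $b \in \Bairespace$, and to verify Haar-nullity by a direct Borel--Cantelli computation. First observe that $H(f)$ is $G_\delta$, since $H(f)=\bigcap_{m}\bigcup_{n\ge m}\{g:|g(n)|\le f(n)\}$ and each set $\{g:|g(n)|\le f(n)\}$ is clopen; in particular $H(f)$ is Borel, hence universally measurable, so it may serve as its own universally measurable superset in the definition of $\mathcal{HN}$. Moreover, since $\Ztoinf$ is abelian, a Borel probability measure $\mu$ witnesses $H(f)\in\mathcal{HN}$ as soon as $\mu(H(f)+g)=0$ for every $g\in\Ztoinf$, because $\mu\big(g+H(f)+h\big)=\mu\big(H(f)+(g+h)\big)$ for all $g,h\in\Ztoinf$.

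Next I would choose $b\in\Bairespace$ with $b(n)+1\ge 2^{n}\,(2f(n)+1)$ for every $n$, and set $\mu=\mu_b$. Fix an arbitrary $g\in\Ztoinf$ and, for $n\in\om$, let $A_n=\{h\in\Ztoinf:|h(n)-g(n)|\le f(n)\}$, a clopen cylinder depending only on the $n$-th coordinate. Unwinding the definition of $H(f)$ gives $H(f)+g=\{h:\exists^{\infty} n\ |h(n)-g(n)|\le f(n)\}=\limsup_n A_n=\bigcap_m\bigcup_{n\ge m}A_n$. Since under $\mu_b$ the $n$-th coordinate is uniformly distributed over $b(n)+1$ integers and the set $\{k\in\Z:|k-g(n)|\le f(n)\}$ contains at most $2f(n)+1$ of them, we get
\[
\mu(A_n)\ \le\ \frac{2f(n)+1}{b(n)+1}\ \le\ 2^{-n},
\]
so $\sum_n\mu(A_n)<\infty$. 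By the first Borel--Cantelli lemma, $\mu(\limsup_n A_n)=0$, that is, $\mu(H(f)+g)=0$; as $g$ was arbitrary, $\mu_b$ is the desired witness and $H(f)\in\mathcal{HN}$.

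There is no serious obstacle here; the argument is essentially the displayed estimate plus Borel--Cantelli. The only point requiring a little care is the precise normalization in the definition of $\mu_b$ (whether the $n$-th marginal is supported on $b(n)$ or on $b(n)+1$ integers), which merely affects the harmless constant in the estimate above and is absorbed by enlarging $b$.
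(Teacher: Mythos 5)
Your proof is correct and follows essentially the same route as the paper: an explicit product witness measure $\mu_b$ with $b$ growing geometrically faster than $f$, the observation that $H(f)+g$ is a $\limsup$ of single-coordinate cylinders, and the first Borel--Cantelli lemma. If anything, you are slightly more careful than the paper about the $+1$ normalizations and about why one-sided translates suffice in the abelian setting.
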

	\begin{proof}
		Let $f'(n)=2^n (f(n)+1)$. We will show that the measure $\mu_{f'}$ witnesses that $H(f)$ is Haar-null. Let $h \in \Ztoinf$ be arbitrary. Clearly, \[H(f)+h=\bigcap_{k\in \N} \bigcup_{n \geq k} \{g+h:|g(n)| \leq f(n)\}, \]
		
		and for every $n\in \N$ we have \[\mu_{f'}(\{g+h:|g(n)| \leq f(n)\}) \leq \frac{2f(n)}{f'(n)}\leq \frac{2}{2^n},\]
		Thus, using $\sum_{n \in \N} \frac{2}{2^n}<\infty$ and the Borel-Cantelli lemma, we get that $\mu_{f'}(H(f)+h)=0$.   
	\end{proof}
	
	Now we are ready for the proof of the main result. Our strategy will be somewhat similar to the idea of the proof of Theorem \ref{t:complexity}, just significantly more sophisticated. To a given analytic set $A \not \in \mathcal{HN}$ we will assign a Borel set $D$ that encodes the witnesses for $A\not \in \mathcal{HN}$, i.e., codes for possible witness measures $\mu$ and compact sets $K$, and translations $t \in \Ztoinf$ with $K+t \subset A$ and $\mu(K)>0$. The coding will be constructed so that it ensures that $D$ is dominating. Using the results of Brendle, Hjorth, and Spinas, we will chose a dominating closed subset of $D$ with some additional properties, and from it a non-Haar-null subset of $A$ will be reconstructed. A compactness argument will yield that this set is in fact closed. 
	
	\begin{proof}[Proof of Theorem \ref{t:mainintro}]
		Let $A \in \mathbf{\Sigma}^1_1(\mathbb{Z}^\omega)$ be a non-Haar null set and let $F \in \HyperClosed{ \Ztoinf \times \Bairespace}$ with $\Project{\Ztoinf}(F)=A$. Fix a Borel bijection $\psi:2^\omega \to \HyperCompact{\Ztoinf \times \Bairespace}$ and define a Borel partial mapping $\phi:\Bairespace \times \Ztoinf \times \Ztoinf \to \HyperCompact{\Ztoinf \times \Bairespace}$ as follows: let $(b,t,c) \in \dom(\phi)$ iff the conjunction of the following holds:
		\begin{enumerate}
			\item \label{c:code} $c-t \in 2^\omega$.
			\item \label{c:bound} $2b \leq^* |t|$.
			\item \label{c:translate} $\Project{\Ztoinf}(\psi(c-t)) \subset \prod_{n \in \omega}[0,b(n)]$.
			\item \label{c:measure} $\mu_b(\Project{\Ztoinf}(\psi(c-t)))>0$.
		\end{enumerate}
		
		Let us use the notation $\PlusProd$ for the $(\Ztoinf \times \Bairespace) \times \Ztoinf \to \Ztoinf \times \Bairespace$ mapping that is the translation of the first coordinate, i.e., $(r,x) \PlusProd t=(r+t,x)$. 
		Define $\phi$ for $(b,t,c) \in \dom(\phi)$ by letting  \[\phi(b,t,c)=\psi(c-t)\PlusProd t,\]
		in other words, $\phi(b,t,c)=\psi(c-t)\PlusProd t$ is the compact subset of $\Ztoinf \times \Bairespace$ defined by $\{(r,x)\PlusProd t: (r,x) \in \psi(c-t)\}$.
		
		Finally, we will need a homeomorphism $\Bijection:\Bairespace \to \Bairespace \times \Ztoinf \times \Ztoinf$. In order to be precise, let us fix a concrete one by letting for every $n \in \N$
		\begin{itemize}
			\item $\Bijection(f)(0)(n)=f(3n),$
			\item for $i \in \{1,2\}$ define $\Bijection(f)(i)(n)=$ \[
			\begin{cases}
			f(3n+i)/2, \text { if $f(3n+i)$ is even,}\\
			-(f(3n+i)+1)/2,\text { if $f(3n+i)$ is odd.}
			\end{cases}
			\]
		\end{itemize}
		
		\begin{lemma}
			\label{l:dom}
			Let $D=\{f \in \Bairespace: \phi(\Bijection(f)) \subset F\}$. Then $D$ is dominating.
		\end{lemma}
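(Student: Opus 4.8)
The plan is to verify directly that $D$ is cofinal in $\leq^*$: fixing an arbitrary $z\in\Bairespace$, I will produce an $f\in D$ with $f\geq^* z$. First I would unwind what membership in $D$ demands. Writing $(b,t,c)=\Bijection(f)$, the explicit formula for $\Bijection$ gives, for every $n$, $\Bijection^{-1}(b,t,c)(3n)=b(n)$, $\Bijection^{-1}(b,t,c)(3n+1)\geq|t(n)|$ and $\Bijection^{-1}(b,t,c)(3n+2)\geq|c(n)|$ (the inverse of the bijection $\omega\to\mathbb{Z}$ appearing in the definition of $\Bijection$ sends $m$ to $2m$ for $m\geq 0$ and to $-2m-1$ for $m<0$, and in both cases the value is $\geq|m|$). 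So it suffices to find $b\in\Bairespace$, $t\in\Ztoinf$ and a compact $K\subset\Ztoinf\times\Bairespace$ such that: (a) for all but finitely many $n$, $b(n)\geq z(3n)$, $|t(n)|\geq z(3n+1)$, $|t(n)|\geq z(3n+2)+1$ and $2b(n)\leq|t(n)|$; (b) $\Project{\Ztoinf}(K)\subset\prod_{n}[0,b(n)]$ and $\mu_b(\Project{\Ztoinf}(K))>0$; (c) $K\PlusProd t\subset F$. Indeed, given such data, put $c:=\psi^{-1}(K)+t$, so that $c-t=\psi^{-1}(K)\in2^\omega$ and $\psi(c-t)=K$; then all four conditions defining $\dom(\phi)$ hold, $\phi(b,t,c)=\psi(c-t)\PlusProd t=K\PlusProd t\subset F$, and hence $f:=\Bijection^{-1}(b,t,c)\in D$; finally $f\geq^* z$ follows from (a), using in addition $|c(n)|\geq|t(n)|-1$ (which holds since $c(n)-t(n)\in\{0,1\}$).

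Next I would put $b:=\ell$, where $\ell(n):=1+\max\{z(3n),z(3n+1),z(3n+2)\}$; this already secures $b(n)\geq z(3n)$ for every $n$. The real issue is to obtain $t$ with $|t|$ large, and for that I would use Lemma~\ref{l:nondom}: the set $H(3\ell)$ is Haar-null, so, since the Haar-null sets form a $\sigma$-ideal and $A\notin\mathcal{HN}$, the set $A':=A\setminus H(3\ell)$ (analytic, being $A$ minus a Borel set) is not Haar-null. As $A'$ is universally measurable, the measure $\mu_\ell$ fails to witness $A'\in\mathcal{HN}$, so, $\Ztoinf$ being abelian, there is $t\in\Ztoinf$ with $\mu_\ell(A'-t)>0$. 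Since $\mu_\ell$ is concentrated on $\prod_n[0,\ell(n)]$ and $A'-t$ is analytic, inner regularity gives a compact $K_0\subset(A'-t)\cap\prod_n[0,\ell(n)]$ with $\mu_\ell(K_0)>0$. For any $p\in K_0$ we have $0\leq p(n)\leq\ell(n)$ for every $n$, while $p+t\in A'\subset\Ztoinf\setminus H(3\ell)$, so $|p(n)+t(n)|>3\ell(n)$ for all but finitely many $n$, whence $|t(n)|\geq|p(n)+t(n)|-|p(n)|>2\ell(n)=2b(n)$ for all but finitely many $n$. As $2\ell(n)$ also dominates $z(3n+1)$ and $z(3n+2)+1$, condition (a) is met.

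It remains to thicken $K_0$ into a compact subset of $\Ztoinf\times\Bairespace$ witnessing (b) and (c). Put $F_t:=F\PlusProd(-t)$, a closed, hence Borel, subset of $\Ztoinf\times\Bairespace$ with $\Project{\Ztoinf}(F_t)=A-t\supset K_0$. Applying Fact~\ref{f:unif} to $F_t$, the measure $\mu_\ell$, and the compact set $K_0$ produces a compact $K\subset F_t$ with $\Project{\Ztoinf}(K)\subset K_0$ and $\mu_\ell(\Project{\Ztoinf}(K))>0$; then $\Project{\Ztoinf}(K)\subset K_0\subset\prod_n[0,\ell(n)]$ gives (b), and $K\PlusProd t\subset F_t\PlusProd t=F$ gives (c). Forming $c$ and $f$ from this $K$ as in the first paragraph completes the construction, and since $z$ was arbitrary, $D$ is dominating.

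The step I expect to be the main obstacle is precisely the one forcing $|t|$ to be large: a single appeal to non-Haar-nullness of $A$ itself produces only \emph{some} translate making $\mu_b$ positive, with no control on its magnitude, and the entire role of Lemma~\ref{l:nondom} in the argument is to let us first pass from $A$ to $A\setminus H(3\ell)$, which guarantees that any point $p$ of the positive-measure compact set $K_0$ satisfies $|p(n)+t(n)|>3\ell(n)$ for cofinitely many $n$; as $p$ lies in the support $\prod_n[0,\ell(n)]$ of $\mu_\ell$, this is exactly what pushes $|t(n)|$ past $2b(n)=2\ell(n)$ for all but finitely many $n$. Beyond this, one only needs to check that the various ``finitely many exceptions'' --- those coming from $\leq^*$ and from the $\exists^\infty$ hidden in the definition of $H$ --- match up, which is routine.
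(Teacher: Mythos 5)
Your proof is correct and follows essentially the same route as the paper's: it uses Lemma \ref{l:nondom} to pass from $A$ to $A\setminus H(3\ell)$ so that the translate $t$ is forced to satisfy $|t|\geq^* 2b$, then Fact \ref{f:unif} to lift the positive-measure compact set into $F$, and finally codes $(b,t,\psi^{-1}(K)+t)$ via $\Bijection$. The only difference is presentational (a direct cofinality argument with explicit constants in place of the paper's argument by contradiction), and you additionally spell out the inner-regularity step that the paper leaves implicit.
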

		\begin{proof}
			
			Assume otherwise, and let $f \in \Bairespace$ witness this fact. 
			Without loss of generality, we can assume that $f$ is constant on the sets of the form $\{3n,3n+1,3n+2\}$, and it attains only positive values. Define an element $f' \in \Ztoinf$  by $f'(n)=2(f(3n)+1)$. 
			
			Using Lemma \ref{l:nondom} and $A \not \in \mathcal{HN}$ we get that $A \setminus H(3f') \not \in \mathcal{HN}$. This yields that there exist a compact set $K_0 \subset \prod_n[0,f'(n)]$ with $\mu_{f'}(K_0)>0$ and a $t \in \Ztoinf$ such that $K_0+t \subset A \setminus H(3f')$. Now, using Fact \ref{f:unif} for the sets $F\PlusProd(-t)$, $K_0$ and $\mu_{f'}$ we get a compact set $K \subset F\PlusProd(-t)$ (or, equivalently, $K\PlusProd t \subset F$) with $\mu_{f'}(\Project{\Ztoinf}(K))>0$ and $\Project{\Ztoinf}(K) \subset K_0$. 
			
			Consider now the function  $g=\Bijection^{-1}(f',t,t+\psi^{-1}(K)).$
			We claim that $g \geq^* f$ and $g \in D$, contradicting our initial assumption and thus finishing the proof.
			In order to see $g \geq^*f$, notice that, as $\emptyset \neq K_0+t \subset(\prod_n[0,f'(n)]+t) 
			\cap (A \setminus H(3f'))$, necessarily $f'+|t| \geq^* 3  f'$, so $|t| \geq^* 2f'$. Then, it is straightforward to check from the definition of $\Bijection$ that $g \geq^* f$ holds. 
			
			Checking $g \in D$ is just tracing back the definitions: clearly, $\Bijection(g)=(f',t,t+\psi^{-1}(K)) \in \dom(\phi)$ holds, as we have already seen \eqref{c:code}, \eqref{c:bound}, and for \eqref{c:translate}, \eqref{c:measure} note that $\psi(t+\psi^{-1}(K)-t)=K$ and $\Project{\Ztoinf}(K) \subset K_0$. Finally, $\phi((f',t,t+\psi^{-1}(K)))=K \PlusProd t\subset F$. 
		\end{proof}
		Using Fact \ref{f:compact} we get that $\dom(\phi)$ is a Borel set, and as $\PlusProd$ is continuous, $\phi$ is a Borel map. Moreover, using \eqref{p:last} from Fact \ref{f:compact}, $D$ must be Borel as well. Since non-dominating sets form a $\sigma$-ideal, by passing to a dominating Borel subset of $D$, we can assume that there is an $n_0 \in \omega$ and a sequence $(\beta, \tau, \gamma) \in \omega^{n_0} \times \mathbb{Z}^{n_0} \times \mathbb{Z}^{n_0}$ such that for each $f \in D$ if $\Bijection(f)=(b,t,c)$ then $2b(k) \leq |t(k)|$ for each $k \geq n_0$ and for each $k<n_0$ we have $\beta(k)=b(k), \tau(k)=t(k), \gamma(k)=c(k)$. 
		
		Now Fact \ref{f:contrest} implies the existence of a closed dominating set $C \subset D$ such that $\phi \circ \Bijection \restriction C$ is continuous. We claim that the set $C'=\Project{\Ztoinf}(\bigcup_{x \in C} \phi(\Bijection(x)))$ is closed and non-Haar null, which finishes the proof, as it is clearly a subset of $A$. 
		
		First, we show that the set is closed. Let $r_n \in C'$ with $r_n \to r$ and assume that $r_n \in \Project{\Ztoinf}(\phi(b_n,t_n,c_n))$, where $\Bijection^{-1}(b_n,t_n,c_n) \in C$. Then, $r_n\in \Project{\Ztoinf}(\phi(b_n,t_n,c_n))=\Project{\Ztoinf}(\psi(c_n-t_n)\PlusProd t_n)$ and by $(b_n,t_n,c_n) \in \dom(\phi)$ we get that $\Project{\Ztoinf}(\psi(c_n-t_n)) \subset \prod_k[0,b_n(k)]$ and by our assumptions on $D$ we have $2b_n(k) \leq |t_n(k)|$ for $k \geq n_0$. Then $|r_n(k)| \geq |t_n(k)|-|b_n(k)| \geq |t_n(k)|/2$ and so $2|r_n(k)|+1 \geq \max\{|b_n(k)|,|t_n(k)|,|c_n(k)|\}$. By our assumptions on the $n_0$-long initial segments of the elements of $\Bijection(D)$, and the convergence of $r_n$ we get that the sequence $(b_n,t_n,c_n)_{n \in \omega}$ must contain a convergent subsequence, and, as $\Bijection$ is a homeomorphism, $\Bijection^{-1}(b_n,t_n,c_n)$ contains such a subsequence as well. If $f \in \Bairespace$ is its limit then of course $f \in C$, and the continuity of $\Project{\Ztoinf} \circ \phi \circ \Bijection \restriction C$ yields that $r \in \Project{\Ztoinf} (\phi (\Bijection(f))) \subset C'$ holds.

		Second, assume that $C'$ is Haar-null. By Lemma \ref{l:coneof} there exists a $b \in \Bairespace$ such that for each $b' \geq^*b$ the measure $\mu_{b'}$ witnesses $C' \in \mathcal{HN}$. Since the set $C$ is dominating, there exists an $f \in C$ such that $f(3n) \geq b(n)$ holds for every large enough $n$. Then if $\Bijection(f)=(b',t',c')$, by definition $b'(n)=f(3n)$, so $\mu_{b'}$ must witness that $C'$ is Haar-null. On the other hand, $\Project{\Ztoinf}(\phi(b',t',c'))=\Project{\Ztoinf}(\psi(c'-t') \PlusProd t')=\Project{\Ztoinf}(\psi(c'-t'))+t' \subset C'$, so $\mu_{b'}$ must witness that the set $\Project{\Ztoinf}(\psi(c'-t'))$ is Haar-null as well. But, $(b',t',c') \in \dom(\phi)$ holds, so by \eqref{c:measure} we have $\mu_{b'}(\Project{\Ztoinf}(\psi(c'-t')))>0$, a contradiction. \end{proof}

	\bibliographystyle{abbrv}
	\bibliography{bblanalytic}
	
\end{document}